\theoremstyle{plain}
\newtheorem{theorem}{Theorem}[section]
\newtheorem{lemma}[theorem]{Lemma}
\newtheorem{proposition}[theorem]{Proposition}
\newtheorem{corollary}[theorem]{Corollary}
\numberwithin{equation}{section}
\theoremstyle{definition}
\newtheorem{remark}[theorem]{Remark}
\DeclareMathOperator{\Hom}{Hom}
\DeclareMathOperator{\Tor}{Tor}
\DeclareMathOperator{\hd}{hd}
\DeclareMathOperator{\gd}{gd}
\DeclareMathOperator{\pd}{pd}
\DeclareMathOperator{\td}{td}
\DeclareMathOperator{\id}{id}
\DeclareMathOperator{\fdim}{findim}
\DeclareMathOperator{\Image}{Im}
\newcommand{\mk}{{\mathbbm{k}}}
\newcommand{\C}{{\mathscr{C}}}
\newcommand{\tC}{{\underline{\mathscr{C}}}}
\newcommand{\Z}{{\mathbb{Z}_+}}
\newcommand{\FI}{{\mathscr{FI}}}
\title{Upper bounds of homological invariants of $FI_G$-modules}
\author{Liping Li}
\address{College of Mathematics and Computer Science, Hunan Normal University; Key Laboratory of Performance Computing and Stochastic Information Processing (Hunan Normal University), Ministry of Education; Changsha, Hunan 410081, China.}
\email{lipingli@hunnu.edu.cn.}
\thanks{The author is supported by the National Natural Science Foundation of China 11541002, the Construct Program of the Key Discipline in Hunan Province, and the Start-Up Funds of Hunan Normal University 830122-0037. He also thanks the referee for carefully checking the manuscript and providing many helpful suggestions.}
\begin{document}

\begin{abstract}
In this paper we describe upper bounds for a few homological invariants of $\FI_G$-modules $V$. These upper bounds are expressed in terms of the generating degree and torsion degree, which measure the ``top" and ``socle" of $V$ under actions of non-invertible morphisms in the category respectively.
\end{abstract}

\maketitle

\section{Introduction}

\subsection{Motivation}

To study the representation theory of category $\FI$ and its variations, recently quite a few homological invariant were introduced in literature, including \emph{homological degrees}, \emph{generating degrees}, \emph{torsion degrees}, \emph{regularities}, \emph{derived regularities}, \emph{widths}, \emph{depths}; see for instances \cite{CE, Li, LY, R}. Moreover, people found various upper bounds for these homological invariants; see \cite[Theorem A]{CE} by Church and Ellenberg, \cite[Theorems 1.4 and 1.8]{LY} by the author and Yu, and \cite[Theorems A, C and D]{R} by Ramos.

Although a lot of proceedings have been achieved along this direction, many problems still need to be clarified or solved. Here we list several aspects:
\begin{enumerate}
\item Relationships between various homological invariants need to be established. For example, the two upper bounds of Castelnuovo-Mumford regularity described in \cite[Theorem A]{CE} and \cite[Theorem 1.8]{LY} strongly suggest that there is a close relationship lying between the torsion degree of a finitely generated $\FI$-module and its first two homological degrees. A clear description of this relationship will greatly enhance the understanding of these two upper bounds.
\item Some homological invariants such as derived regularity in \cite[Theorem C]{R}, though have important theoretic meaning, are hard to compute. Therefore, for practical purpose, it would be useful to get their upper bounds in terms of other homological invariants which are easy to obtain.
\item Upper bounds of some homological invariants are still unknown, including the injective dimensions of finitely generate $\FI$-modules over fields of characteristic 0, and torsion degrees of homologies in the finite complex of filtered modules (\cite[Theorem C]{LY}).
\item The proofs of many known results mentioned above rely on a detailed investigation of the combinatorial structure of the category $\FI$. A conceptual approach is desirable since it usually simplifies those proofs and might give some hints to study other combinatorial categories appearing in representation stability theory \cite{SS2}.
\end{enumerate}

The purpose of this paper is to get upper bounds of homological invariants in terms of generating degrees and torsion degrees, which roughly speaking, measure the ``tops" and ``socles" of finitely generated $\FI$-modules with respect to the actions of noninvertible morphisms. As in \cite{LY}, the main technical tools we use are the shift functor $\Sigma$ and its induced cokernel functor $D$ introduced in \cite[Subsection 2.3]{CEFN} and \cite[Section 3]{CE}.

\subsection{Notation}

Throughout this paper let $\mk$ be a commutative Noetherian ring, and let $G$ be a finite group. By $\C$ we denote the full subcategory of $\FI_G$ whose objects are parameterized by $\Z$, the set of nonnegative integers. It is clear that $\C$ is equivalent to $\FI_G$. The reader may refer to \cite{GL} or \cite{SS2} for a definition of $\FI_G$. As in \cite{LY}, we let $\tC$ be the $k$-linearization of $\C$.

We briefly recall the following definitions; for details, please refer to \cite{Li} and \cite{LY}.

A \textit{representation} $V$ of $\C$, or a $\C$-\emph{module} $V$, is a covariant functor $V$ from $\C$ to the category of left $\mk$-modules. Equivalently, $V$ is a $\tC$-module, a $\mk$-linear covariant functor. A $\C$-module $V$ is \emph{finitely generated} if there exists a surjective homomorphism
\begin{equation*}
\bigoplus _{i \in \Z} \tC(i, -) ^{\oplus a_i} \to V
\end{equation*}
such that $\sum _{i \in \Z} a_i < \infty$. It is \emph{torsion} if its value $V_i$ on $i$ is 0 for $i \gg 0$.

We define
\begin{align*}
J & = \bigoplus _{0 \leqslant i < j} \tC(i, j)\\
\tC_0 & = \bigoplus _{i \in \Z} \tC(i, i) \cong \tC / J,
\end{align*}
which both are $(\tC, \tC)$-bimodules. The \emph{torsion degree} of $V$ is
\begin{equation*}
\td(V) = \sup \{ i \in \Z \mid \Hom _{\tC} (\tC(i, i), V) \neq 0 \}
\end{equation*}
or $-\infty$ if the above set is empty, and in the latter case we say that $V$ is \emph{torsionless}. For $s \geqslant 0$, the $s$-th \emph{homology} and \emph{homological degree} are
\begin{align*}
H_s (V) & = \Tor _s^{\tC} ( \tC_0, V)\\
\hd_s(V) & = \td(H_s(V)).
\end{align*}
The $0$-th homological degree is called the \emph{generating degree}, and is denoted by $\gd(V)$.

\begin{remark} \normalfont
From the viewpoint of representation theory, for a finitely generated $\C$-module $V$, the zeroth homology $H_0(V)$ is precisely the top of $V$ with respect to the actions of non-invertible morphisms in $\C$. Dually, $\Hom_{\C} (\tC_0, V)$ is the socle of $V$ with respect to the actions of non-invertible morphisms in $\C$, which is nothing but the following set:
\begin{equation*}
\bigoplus _{n \geqslant 0} \{ v \in V_n \mid \alpha \cdot v = 0 \quad \forall \alpha \in \C(n, n+1) \}.
\end{equation*}
We remind the reader that although $\tC_0$ is an infinite direct sum, $\Hom_{\C} (\tC_0, V)$ is still a direct sum, and is actually a finite direct sum by the locally Noetherian property of $\tC$. Correspondingly, the generating degree and torsion degree of $V$ measure the top and socle of $V$ with respect to actions of non-invertible morphisms in $\C$ respectively.
\end{remark}

A finitely generated $\tC$-module $V$ is a \emph{basic filtered module} if there is a certain $n \in \Z$ such that $V \cong \tC \otimes _{\mk G_n} V_n$, where by $G_n$ we denote $\C(n, n)$, the group of endomorphisms of object $n$. The module $V$ is \emph{filtered} if it has a filtration by basic filtered modules.

\begin{remark} \normalfont
Filtered modules were introduced by Nagpal in \cite{N} as natural generalizations of projective modules. They have been shown to have similar behaviors as projective modules in many aspects, and hence are very useful for homological computations. Homological characterizations of these objects are described in \cite[Theorem 1.3]{LY}, which were independently obtained by Ramos in \cite{R} via a different approach.
\end{remark}

\subsection{Main result}

The main result of this paper is:

\begin{theorem}[Upper bounds for homological invariants] \label{main result}
Let $\mk$ be a commutative Noetherian ring and let $V$ be a finitely generated $\C$-module. Then we have:
\begin{enumerate}
\item $\td(V) \leqslant \gd(V) + \hd_1(V) - 1$.

\item For $s \geqslant 1$, $\hd_s(V) \leqslant \max \{ 2\gd(V)-1, \, \td(V) \} + s$ and $\hd_s(V) \leqslant \gd(V) + \hd_1(V) + s - 1$.

\item If the projective dimension $\pd(V) < \infty$, then $V$ is filtered and $\pd(V) \leqslant \fdim \mk$, the finitistic dimension of $\mk$.

\item There exists a complex
\begin{equation*}
F^{\bullet}: \quad 0 \to V \to F^{-1} \to F^{-2} \to \ldots \to F^{-n-1} \to 0
\end{equation*}
such that
\begin{itemize}
\item each $F^i$ is a filtered module;

\item $n \leqslant \gd(V)$;

\item all homologies of $F^{\bullet}$ are finitely generated torsion modules, and
\begin{equation*}
\begin{cases}
\td(H_i(F^{\bullet})) = \td(V), & \text{for }i = -1;\\
\td(H_i(F^{\bullet})) \leqslant 2\gd(V) + 2i + 2, & \text{for } -n - 2 \leqslant i \leqslant -2.
\end{cases}
\end{equation*}
\end{itemize}

\item $\Sigma_n V$ is a filtered module for $n > \max \{\td(V), \, 2\gd(V) -2 \}$.

\item If $\mk$ is a field, then there exists a rational polynomial $f$ such that $\dim_k (V_n) = f(n)$ for $n > \max \{\td(V), \, 2\gd(V) -2 \}$.

\item The injective dimension $\id(V) \leqslant \max \{ 2\gd(V) - 1, \, \td(V) \}$ whenever $\mk$ is a field of characteristic 0 and $V$ is not injective.
\end{enumerate}
\end{theorem}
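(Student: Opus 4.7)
The plan is to combine part (4) with a key characteristic-zero input: every finitely generated filtered $\C$-module is injective. This is the characteristic-zero analogue of the projectivity of filtered modules (which holds in arbitrary characteristic) and follows from the semisimplicity of each $\mk G_n$ together with a direct injectivity check for an induced module $\tC \otimes_{\mk G_n} V_n$ (alternatively, by invoking Nagpal-type or Sam--Snowden-type results on $\FI$-modules over fields of characteristic zero). With this in hand, the coresolution supplied by (4) becomes an injective coresolution of $V$ up to explicitly controlled torsion defects.

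More precisely, apply (4) to obtain a complex
\begin{equation*}
0 \to V \to F^{-1} \to F^{-2} \to \ldots \to F^{-n-1} \to 0
\end{equation*}
with $n \leq \gd(V)$, each $F^i$ injective in characteristic zero, and torsion homologies $H_i$ with $\td(H_{-1}) = \td(V)$ and $\td(H_i) \leq 2\gd(V) + 2i + 2$ for $-n-2 \leq i \leq -2$. For any finitely generated test module $W$, I would run a hyper-$\mathrm{Ext}$ spectral sequence against this complex (or, equivalently, break it into short exact sequences and chase $\mathrm{Ext}$). Since each $F^i$ is injective, the only contributions to $\mathrm{Ext}^k_{\tC}(W, V)$ come from $\mathrm{Ext}^p_{\tC}(W, H_i)$, shifted by the position of $H_i$ in the complex.

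To handle these contributions, I would separately bound $\id(T)$ for a finitely generated torsion $\C$-module $T$ in terms of $\td(T)$. In characteristic zero this can be done by an induction on support: a torsion module decomposes according to its finite support, and the indecomposable torsion pieces at each degree have explicit injective envelopes, yielding $\id(T) \leq \td(T)$ or a bound of the same shape. Combining this with the torsion-degree bounds on $H_i$ supplied by (4) and summing up produces $\id(V) \leq \max\{2\gd(V) - 1, \td(V)\}$; the hypothesis that $V$ is not injective rules out the degenerate case $\id(V) = -\infty$.

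The main obstacle is twofold. First, establishing injectivity of filtered modules in characteristic zero requires genuine input beyond part (4), likely a duality or Sam--Snowden-style structure theorem; this is the step where the characteristic hypothesis enters essentially. Second, tracking degrees through the spectral sequence needs to be done carefully enough to match the sharp bound $\max\{2\gd(V) - 1, \td(V)\}$ rather than a weaker variant like $\max\{2\gd(V) - 1, \td(V) + 1\}$ that a naive shift argument using part (5) would yield; this is where the delicate bounds $\td(H_i) \leq 2\gd(V) + 2i + 2$ (with the crucial coefficient $2$ on $i$) from (4) do the real work.
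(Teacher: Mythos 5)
Your proposal addresses only statement (7) of the theorem. Statements (1)--(6) are left entirely unproved, and your argument for (7) explicitly takes statement (4) as an input --- whose torsion-degree bounds $\td(H_i(F^{\bullet}))\leqslant 2\gd(V)+2i+2$ are themselves deduced in the paper from statement (1). So, measured against the full statement, the principal gap is that almost all of it is missing: the regularity bounds (1)--(2) require the inductive argument with the shift functor $\Sigma$ and the cokernel functor $D$ (the heart of Section 2), (3) is the homological characterization of filtered modules from \cite{LY}, and (4)--(6) require actually constructing the complex $F^{\bullet}$ and bounding the torsion degrees of its homologies. None of this is supplied or sketched.

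Restricting to (7), your route is viable and genuinely different from the paper's. The paper does not use $F^{\bullet}$ there: it splits off $V_T$ (with $\id(V_T)\leqslant\td(V_T)$ by the socle/injective-envelope recursion that you also need), embeds $V_F$ into a projective-injective $P$ with $\gd(P)<\gd(V_F)$ using the coinduction results of \cite{GL}, bounds $\gd$ and $\td$ of the cokernel via statement (1), and inducts on $\gd(V)$. Your alternative --- filtered modules are projective in characteristic $0$ by semisimplicity of $\mk G_n$, hence injective by \cite{GL, SS1}, so $F^{\bullet}$ is an injective coresolution of $V$ up to the torsion defects --- does close numerically: $H_{-j-1}(F^{\bullet})=V^{-j}_T$ has $\td\leqslant 2\gd(V)-2j$ and enters at cohomological shift $j$, contributing $2\gd(V)-j\leqslant 2\gd(V)-1$ for $j\geqslant 1$, while $H_{-1}(F^{\bullet})=V_T$ contributes $\td(V)$ with no shift. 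Two points must be nailed down for this to count as a proof. First, you need the precise bound $\id(T)\leqslant\td(T)$ for finitely generated torsion $T$; the weaker ``bound of the same shape'' you allow yourself would degrade the final answer to $\td(V)+1$. Second, the bookkeeping should be done by dimension shifting along the short exact sequences $0\to V^{i}_T\to V^{i}\to V^{i}_F\to 0$ and $0\to V^{i+1}_F\to F^{i}\to V^{i}\to 0$ rather than by a loose hyper-$\mathrm{Ext}$ estimate against finitely generated test objects only (vanishing of $\mathrm{Ext}^k(W,V)$ for all finitely generated $W$ does not immediately give $\id(V)<k$; the cosyzygy argument avoids this issue). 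With those repairs, your argument for (7) is correct, but the other six statements still need proofs.
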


\begin{remark}
The upper bounds of torsion degree and Castelnuovo-Mumford regularity in terms of the first two homological degrees were firstly proved for $\FI$ by Church and Ellenberg in \cite[Theorem 3.8 and Theorem A]{CE}, where in Theorem 3.8 of that paper we can let $a = p =1$. This upper bound of Castelnuovo-Mumford regularity was generalized to $\FI_G$ by Ramos using essentially the same idea in \cite[Theorem A]{R}. The upper bound of Castelnuovo-Mumford regularity in terms of the generating degree and torsion degree was described in \cite[Theorem 1.4]{LY}, whose proof relied on \cite[Theorem A]{CE}. In this paper we  give a proof without going through the combinatorial structure of $\FI_G$.

Statement (3) is a part of \cite[Theorem 1.3]{LY}. By definition, the \emph{finitistic dimension} of $\mk$ is the supremum of projective dimensions of finitely generated $\mk$-modules whose projective dimension is finite. By the Auslander-Buchsbaum formula, it coincides with the supremum of the depth of the localizations $\mk_\mathfrak{m}$ over all maximal ideals $\mathfrak{m}$.

The existence of such a complex of filtered modules described in Statement (4) for every finitely generated $\FI$-module was first established in \cite[Theorem A]{N}. In \cite{LY} we proved it via a different approach. Although it was known that all homologies of this complex are torsion, upper bounds of their torsion degrees were not obtained.

Nagpal proved in \cite[Theorem A]{N} that $\Sigma_N V$ is filtered for $N \gg 0$. Actually, when $\mk$ is a field of characteristic 0, the same conclusion has been established in \cite{GL}. In \cite[Theorem C]{R} Ramos gave a sharp lower bound in terms of \emph{derived regularity}, which is precisely the maximum of torsion degrees of those homologies in the complex $F^{\bullet}$. Therefore, Statement (5) actually provides an upper bound for the derived regularity of $V$.

An important representation stability phenomenon of finitely generated $\FI$-modules is the so called \emph{polynomial growth property}. This was shown for fields of characteristic 0 in \cite{CEF} and for arbitrary fields in \cite{CEFN} using a different method. That is, for $N \gg 0$, dimensions of $V_N$ satisfy a rational polynomial. In \cite[Theorem D]{R} a lower bound of such $N$ was described in terms of the generating degree and the relation degree.

When $\mk$ is a field of characteristic 0, every finitely generated projective module is injective as well. Moreover, every finitely generated module has finite injective dimension. This fact first appeared in \cite{SS1} in the language of twisted commutative algebras, and was generalized to $\FI_G$ by Gan and the author in \cite{GL} by considering the coinduction functor, which is the right adjoint of the shift functor. But upper bounds for injective dimensions were not described explicitly.
\end{remark}

\section{Castelnuovo-Mumford regularity}

Let $V$ be a finitely generated $\C$-module. The main goal of this section is to find a relationship between the torsion degree of $V$ and its first two homological degrees, and prove the two upper bounds of Castelnuovo-Mumford regularity.

Recall that there is a self-embedding functor $\iota: \C \to \C$ which sends an object $i \in \Z$ to $i+1$, and it induces the shift functor $\Sigma$. This gives a natural map $V \to \Sigma V$, whose cokernel is denoted by $DV$. Therefore, we obtain the following exact sequence
\begin{equation*}
0 \to K \to V \to \Sigma V \to DV \to 0,
\end{equation*}
where $K$ is the kernel.

\begin{lemma} \label{the kernel}
If $V$ is generated in degree 0, then $DV = 0$. Otherwise, $\gd(DV) = \gd(V) -1$. Moreover, the kernel $K$ has the following description \footnote{This description of the kernel was also mentioned by Ramos in a personal communication with the author.}
\begin{equation*}
K \cong \Hom_{\C} (\tC_0, V) \cong \bigoplus _{n \geqslant 0} \{ v \in V_n \mid \alpha \cdot v = 0 \, \forall \alpha \in \C(n, n+1) \}.
\end{equation*}
\end{lemma}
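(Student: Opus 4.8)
The plan is to compute everything directly from the exact sequence $0 \to K \to V \to \Sigma V \to DV \to 0$, unwinding the definitions of $\Sigma$ and $D$ in terms of the values of $V$ on objects. First I would record that for each $n \geqslant 0$ the map $V \to \Sigma V$ is given on object $n$ by the $\mk G_n$-linear map $V_n \to V_{n+1}$ induced by a fixed injection $[n] \hookrightarrow [n+1]$ (with trivial $G$-decoration), so that $(DV)_n = V_{n+1} / (\mk G_{n+1} \cdot \Image(V_n \to V_{n+1}))$ and $K_n = \{ v \in V_n \mid v \mapsto 0 \text{ in } V_{n+1}\}$. The claim about $\gd(DV)$ then follows by a standard generation argument: a set of generators of $V$ in degrees $\leqslant d$ maps, after applying $\Sigma$ and passing to cokernels, to a generating set of $DV$ concentrated in degrees $\leqslant d-1$, and conversely if $DV$ were generated in degrees $\leqslant d-2$ one could lift generators and, using that $\Sigma V$ is generated by the image of $V$ together with $DV$, deduce $V$ is generated in degrees $\leqslant d-1$; when $V$ is generated in degree $0$ there is nothing in positive degree to produce, so $DV = 0$. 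This part I expect to be routine.

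The substantive point is the identification of $K$. The inclusion $\Hom_{\C}(\tC_0, V) \hookrightarrow K$ is immediate once one observes that a homomorphism $\tC(n,n) \to V$ is the same as an element $v \in V_n$ annihilated by $G_n$-equivariance-nothing, i.e. just an element of $V_n$, but one factoring through $\tC_0$ means $v$ is killed by $J$, hence in particular by every $\alpha \in \C(n,n+1)$. For the reverse inclusion I would argue that the map $V_n \to V_{n+1}$ defining $\Sigma$ is, up to the $G_{n+1}$-action, a component of the action of $\C(n,n+1)$ on $V$; more precisely, if $v \in V_n$ is killed by a single injection $[n]\hookrightarrow[n+1]$, then because $G_{n+1}$ acts transitively on such injections (all injections $[n]\hookrightarrow[n+1]$ with a given $G$-decoration differ by post-composition with an element of $G_{n+1}$, and decorations on the single extra point are absorbed into $G$) and $V$ is a functor, $v$ is killed by every $\alpha \in \C(n,n+1)$. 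Since every noninvertible morphism out of $n$ factors through some $\alpha \in \C(n, n+1)$, such $v$ generates a copy of a quotient of $\tC(n,n)$ on which $J$ acts as zero, giving the factorization through $\tC_0$. The last displayed identification of $\Hom_{\C}(\tC_0,V)$ with $\bigoplus_n \{v \in V_n \mid \alpha \cdot v = 0\ \forall \alpha \in \C(n,n+1)\}$ is then exactly the content of the Remark preceding the lemma, applied objectwise.

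The main obstacle is making precise the transitivity claim in the $\FI_G$ setting: one must check that the cokernel functor $D$, built from the \emph{specific} self-embedding $\iota$, really does detect elements annihilated by the \emph{whole} morphism space $\C(n,n+1)$, not merely by the one chosen injection. This requires carefully tracking how $G_{n+1}$ acts on $\C(n, n+1)$ and on $V_{n+1}$, and verifying that the $G$-labels carried by morphisms in $\FI_G$ do not introduce obstructions (they do not, because $\C(n,n+1) \cong G_{n+1} \times_{G_n} \C(n,n)$ as a $(G_{n+1}, G_n)$-biset after stripping the injection, so transitivity of $G_{n+1}$ on the underlying injections suffices). Once that bookkeeping is in place, the two inclusions combine to give $K \cong \Hom_{\C}(\tC_0, V)$ and the proof is complete.
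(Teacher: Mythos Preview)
Your proposal is correct and follows essentially the same approach as the paper. The paper's proof of the kernel identification is exactly your transitivity argument: since $\C(n+1,n+1)$ acts transitively on $\C(n,n+1)$, an element $v\in V_n$ killed by the single injection $\iota_n$ defining $\Sigma$ is killed by every $\alpha\in\C(n,n+1)$, and conversely; the paper states this in one sentence without the $\FI_G$ bookkeeping you spell out, and for the $\gd(DV)$ statement it simply cites \cite[Proposition~2.4]{LY} rather than giving your direct generator-lifting sketch.
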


Note that although $\tC_0$ is an infinitely generated $\C$-module, $\Hom_{\C} (\tC_0, V)$ is finitely generated. Indeed, since $V$ is finitely generated, and $\tC$ is a locally Noetherian category, for $n \gg 0$, we have $\Hom_{\C} (\tC(n, n), V) = 0$.

\begin{proof}
The first statement was proved in \cite[Proposition 2.4]{LY}. To show the second statement, we just observe that the natural map $\iota^{\ast}: V \to \Sigma V$ consists of a family of maps $\iota^{\ast}_n: V_n \to (\Sigma V)_n = V_{n+1}$, which is induced by the inclusion $\iota_n: [n] \to [n+1]$ by adding 1 to every number in $[n]$. Furthermore, for $v \in V_n$, if its image in $V_{n+1}$ under a morphism $\alpha \in \C(n, n+1)$ is not 0, then its images under all morphisms in $\C(n, n+1)$ are nonzero since the group $\C(n+1, n+1)$ acts transitively on $\C(n, n+1)$, and in particular the image of $v$ under $\iota_n^{\ast}$ is not 0 as well. This shows that $K$ consists of all elements in $V$ such that there exists a certain $n \in \Z$ with $\C(n, n+1) \cdot v = 0$, and the conclusion follows.
\end{proof}

\begin{remark} \normalfont
In \cite{CE} Church and Ellenberg defined the left derived functor $H^D_{\bullet}$ of the functor $D$, and this was also used by Ramos in \cite{R}. The kernel $K$ appearing in this exact sequence is precisely $H^D_1(V)$. Therefore, the first derived functor $H^D_1$ of $D$ is isomorphic to the classical functor $\Hom _{\C} (\tC_0, -)$.
\end{remark}

An immediate corollary is:

\begin{corollary}
Let $K$ be as in the previous lemma. Then $\td(V) = \td(K) = \gd(K)$.
\end{corollary}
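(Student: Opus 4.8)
The plan is to show that each of $\td(V)$, $\td(K)$, and $\gd(K)$ equals $\sup\{n \in \Z \mid K_n \neq 0\}$; since $K$ is a finitely generated torsion module this supremum is a well-defined integer, or $-\infty$ precisely when $V$ is torsionless.

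First I would read off $\td(V)$ directly from Lemma~\ref{the kernel}: the isomorphism there identifies $K_n$ with $\{v \in V_n \mid \alpha \cdot v = 0 \ \forall \alpha \in \C(n,n+1)\} \cong \Hom_{\tC}(\tC(n,n), V)$ for every $n$, so by the very definition of torsion degree we get immediately $\td(V) = \sup\{n \mid K_n \neq 0\}$.

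Next I would observe that $J$ annihilates $K$: any noninvertible morphism $[n]\to[m]$ in $\C$ factors as $[n] \xrightarrow{\iota_n} [n+1] \to [m]$, and $\iota_n$ — indeed every element of $\C(n,n+1)$ — kills $K_n$ by construction. Hence $K$ is in fact a module over $\tC_0 = \tC/J$. For any such module $M$ the remaining two invariants collapse to the desired shape: $H_0(M) = \tC_0 \otimes_{\tC} M = M/JM = M$, so $\gd(M) = \td(H_0(M)) = \td(M)$; and the annihilation condition cutting out $\Hom_{\tC}(\tC(n,n), M)$ inside $M_n$ is now vacuous, so $\Hom_{\tC}(\tC(n,n), M) \cong M_n$ and therefore $\td(M) = \sup\{n \mid M_n \neq 0\}$. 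Applying this with $M = K$ and comparing with the first step finishes the proof.

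I do not expect a genuine obstacle here; once Lemma~\ref{the kernel} is in hand the corollary is essentially formal. The only points demanding mild care are the factorization argument showing $JK = 0$ and the two identifications $\tC_0 \otimes_{\tC} M \cong M$ and $\Hom_{\tC}(\tC(n,n), M) \cong M_n$ for $\tC_0$-modules $M$, both of which unwind directly from the definitions of the tensor and Hom functors over $\tC$ together with the fact, already established in the excerpt, that every noninvertible morphism acts as zero on $\tC_0$.
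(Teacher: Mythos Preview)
Your argument is correct and is exactly the unpacking the paper has in mind: the paper states the corollary as ``immediate'' with no proof, relying on the identification $K \cong \Hom_{\tC}(\tC_0, V)$ from Lemma~\ref{the kernel} together with the observation that $K$ is a $\tC_0$-module, which you have spelled out carefully. The factorization step showing $JK=0$ and the two identifications for $\tC_0$-modules are precisely the ingredients that make it ``immediate''.
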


Now we can give another proof for the upper bound of Castelnuovo-Mumford regularity described in \cite[Theorem A]{CE} and \cite[Theorem A]{R}.

\begin{theorem}
Let $V$ be a finitely generated $\C$-module. Then
\begin{equation*}
\td(V) \leqslant \gd(V) + \hd_1(V) - 1
\end{equation*}
and for $s \geqslant 1$,
\begin{equation*}
\hd_s(V) \leqslant \gd(V) + \hd_1(V) + s - 1.
\end{equation*}
\end{theorem}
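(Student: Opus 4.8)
The plan is to run an induction on $\gd(V)$ using the four-term exact sequence
$0 \to K \to V \to \Sigma V \to DV \to 0$,
which I split into two short exact sequences
$0 \to K \to V \to W \to 0$ and $0 \to W \to \Sigma V \to DV \to 0$,
where $W$ is the image of the natural map $V \to \Sigma V$. The base case is $\gd(V) = 0$: here Lemma~\ref{the kernel} gives $DV = 0$, so $V \hookrightarrow \Sigma V$ is injective, $K = 0$, and the corollary above yields $\td(V) = \td(K) = -\infty$, so both inequalities hold trivially (using $\hd_s(\Sigma V) \le \hd_s(V)$ and induction on the homology bound, or noting directly that a module generated in degree $0$ is projective, hence has $\hd_s = -\infty$ for $s \ge 1$). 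For the inductive step I would assume the statement for modules generated in degree $< \gd(V)$ and apply it to $DV$, whose generating degree is $\gd(V) - 1$ by Lemma~\ref{the kernel}.

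The key tool for transferring information along these short exact sequences is the long exact sequence of homology $H_\bullet = \Tor^{\tC}_\bullet(\tC_0, -)$, together with two standard facts about the shift functor that I would invoke (and which follow from the cited material in \cite{CEFN, CE, LY}): that $\Sigma$ is exact, that $\gd(\Sigma V) \le \gd(V)$, and crucially that $\hd_s(\Sigma V) \le \hd_s(V)$ for all $s$ — more precisely, one has $\hd_s(\Sigma V) \le \max\{\hd_s(V) - 1, \hd_{s+1}(V) - 1\}$ or a similar shift estimate, since shifting decreases torsion degrees of homologies. Applying $H_\bullet$ to $0 \to W \to \Sigma V \to DV \to 0$ and using induction on $DV$ lets me bound $\hd_s(W)$ and $\td(W)$; then applying $H_\bullet$ to $0 \to K \to V \to W \to 0$, where $K$ is torsion with $\td(K) = \gd(K) = \td(V)$ concentrated (so $H_0(K)$ detects $\td(V)$ and $H_s(K) $ for $s\ge 1$ contributes only in a controlled range), I can read off $\td(V)$ from $\hd_0(K)$ and close the induction. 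For the first inequality, the point is that the connecting map $H_1(W) \to H_0(K)$ together with $H_1(V) \to H_1(W)$ forces $\gd(K) = \td(V)$ to be at most $\gd(V) + \hd_1(V) - 1$: $H_0(K)$ sits in degrees bounded by $\hd_1(W) $, and $\hd_1(W)$ is controlled by $\hd_1(V)$ and by $\hd_1(DV)$ via induction, the latter being at most $\gd(DV) + \hd_1(DV) - 1 \le (\gd(V)-1) + \hd_1(V) - 1 + (\text{shift correction})$; the bookkeeping of these shifts is where one must be careful.

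For the second inequality, $\hd_s(V) \le \gd(V) + \hd_1(V) + s - 1$ for $s \ge 1$, I would again use the two long exact sequences: $\hd_s(V)$ is squeezed between $\hd_s(W)$ and $\hd_s(K)$ (from $0 \to K \to V \to W \to 0$), $\hd_s(W)$ is squeezed between $\hd_s(\Sigma V) \le \hd_s(V)$-type terms and $\hd_{s+1}(DV)$ (from $0 \to W \to \Sigma V \to DV \to 0$), and $\hd_\bullet(K)$ is governed entirely by $\td(K) = \td(V)$, which the first inequality has just bounded by $\gd(V) + \hd_1(V) - 1$; the torsion module $K$ being generated in a single degree range means $\hd_s(K) \le \td(V) + s$ roughly, which feeds back cleanly. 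So the two statements are proved simultaneously by a single induction, the first feeding the second within each step.

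The main obstacle I anticipate is the precise shift estimate $\hd_s(\Sigma V) \le (\text{something involving }\hd_s(V), \hd_{s+1}(V))$ and making the inductive constants line up exactly — it is easy to prove a version of these inequalities with a worse constant, and the delicacy is in not losing the "$-1$". One must track exactly how $H_\bullet$ behaves under $\Sigma$ (for which the cited results of Church--Ellenberg and the author--Yu give $H_s(\Sigma V)$ as a subquotient of a shift of $H_s(V)$, or fit it into an exact sequence with $H_s(V)$ and $H_{s+1}(V)$), and then verify that the induction on $\gd(V)$, applied to $DV$ with $\gd(DV) = \gd(V) - 1$ but possibly $\hd_1(DV)$ not obviously smaller than $\hd_1(V)$, still terminates with the stated bound; establishing $\hd_1(DV) \le \hd_1(V)$ (or a suitable substitute) is the crux that keeps the recursion from drifting.
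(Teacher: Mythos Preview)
Your plan has two genuine gaps.

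First, the base case is incorrect. When $\gd(V)=0$, Lemma~\ref{the kernel} gives $DV=0$, which means $V\to\Sigma V$ is \emph{surjective}, not injective; so $K$ need not vanish. A module generated in degree $0$ is not projective in general (take the simple module supported only at the object $0$: it has $\gd=0$, $\Sigma V=0$, $K=V$, $\td(V)=0$). The base case therefore needs a real argument.

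Second, and more seriously, the inductive step is circular as written. You propose to bound $\hd_s(V)$ via $\hd_s(W)$, then $\hd_s(W)$ via $\hd_s(\Sigma V)$ and $\hd_{s+1}(DV)$. But any control of $\hd_s(\Sigma V)$ in terms of $\hd_s(V)$ feeds the unknown back into itself, and the inductive hypothesis applied to $DV$ requires a bound on $\hd_1(DV)$ that you have not produced independently of $\hd_1(V)$. You correctly flag this (``establishing $\hd_1(DV)\le\hd_1(V)$ \ldots\ is the crux''), but no mechanism is offered to break the loop, and in fact $\hd_1(DV)\le\hd_1(V)$ is not available a priori.

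The paper's proof runs the same induction on $\gd(V)$ but avoids both problems by a different setup. It first invokes \cite[Corollary 3.4]{LY} to replace $V$ by a submodule $V'$ (with filtered cokernel, hence identical $\td$ and higher $\hd_s$) satisfying $\hd_1(V')>\gd(V')$; this extra inequality forces $\hd_1(V')=\gd(W)$ where $W$ is the first syzygy of $V'$, converting $\hd_1$ into a generating degree. Then, rather than splitting the four-term sequence for $V'$, the paper applies the functors $\Sigma$ and $D$ to the whole syzygy sequence $0\to W\to P\to V'\to 0$ with $P$ projective. The snake lemma yields short exact sequences $0\to K\to DW\to U\to 0$ and $0\to U\to DP\to DV'\to 0$. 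From these one reads off $\hd_1(DV')\le\gd(U)\le\gd(DW)\le\gd(W)-1=\hd_1(V')-1$, a clean non-circular input for the induction on $DV'$. The module $\Sigma V'$ never enters the estimates; at the end, $\hd_{s+1}(V')=\hd_s(W)$ is recovered from the $\hd_i(DW)$ via \cite[Corollary 2.12]{LY}, which bounds the homological degrees of a torsionless module by those of its $D$. The two ingredients your outline is missing are precisely this filtered-quotient reduction and the passage to the syzygy diagram.
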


\begin{proof}
We use induction on $\gd(V)$. The conclusion holds trivially for $V = 0$. Now suppose that $V \neq 0$. By \cite[Corollary 3.4]{LY}, there is a short exact sequence $0 \to V' \to V \to V'' \to 0$ such that $V''$ is filtered and $\hd_1(V') > \gd(V')$. Since $V''$ is torsionless by \cite[Proposition 3.9]{LY}, and $H_s(V'') = 0$ for $s \geqslant 1$ by \cite[Theorem 1.3]{LY}, we know that $\td(V) = \td(V')$ and $\hd_s(V) = \hd_s(V')$ for $s \geqslant 1$. Moreover, the following short exact sequence $0 \to H_0(V') \to H_0(V) \to H_0(V'') \to 0$ implies that $\gd(V) \geqslant \gd(V')$. Therefore, it suffices to show the inequalities for $V'$.

If $\gd(V') < \gd(V)$, then the conclusion for $V'$ follows from the induction hypothesis. Otherwise, $\gd(V') = \gd(V)$. In that case we consider the following commutative diagram
\begin{equation*}
\xymatrix{
0 \ar[r] & W \ar[r] \ar[d] & \Sigma W \ar[r] \ar[d] & DW \ar[d]^{\delta} \ar[r] & 0\\
0 \ar[r] & P \ar[r] \ar[d] & \Sigma P \ar[r] \ar[d] & DP \ar[r] \ar[d] & 0\\
K \ar[r] & V' \ar[r] & \Sigma V' \ar[r] & DV' \ar[r] & 0,
}
\end{equation*}
where the first two rows and columns are short exact sequences, and $P$ is a projective module satisfying $\gd(P) = \gd(V')$.  To construct it, one starts from the injective map $W \to P$ to get the first two rows. Applying the snake lemma to them, one gets an exact sequence
\begin{equation*}
0 \to \ker \delta \to V' \to \Sigma V' \to DV' \to 0.
\end{equation*}
But it is known that the kernel of $V' \to \Sigma V'$ is $K$. Putting these pieces of information together, we get the above diagram. We remind the reader that in the third column the map $DP \to DV'$ is surjective. Furthermore, the third column gives us two exact sequences
\begin{align*}
& 0 \to K \cong \ker \delta \to DW \to U \to 0,\\
& 0 \to U \to DP \to DV' \to 0.
\end{align*}
Note that $\gd(DV') < \gd(V') = \gd(V)$. Moreover, since $\hd_1(V') > \gd(V')$, by \cite[Lemma 2.9]{LY}, one has $\hd_1(V') = \gd(W)$.

The first sequence induces a long exact sequence
\begin{equation*}
\ldots \to H_1(U) \to H_0(K) \to H_0(DW) \to H_0(U) \to 0,
\end{equation*}
so we get
\begin{align*}
\td(V') & = \td(K) = \gd(K) & \text{ by the previous corollary}\\
& \leqslant \max \{ \gd(DW), \, \hd_1(U) \} & \text{ by the long exact sequence}\\
& \leqslant \max \{ \gd(W) - 1, \, \hd_2(DV') \} & \text{ by Lemma \ref{the kernel} and the second sequence}\\
& = \max \{\hd_1(V') - 1, \, \hd_2(DV') \}.
\end{align*}
By the induction hypothesis,
\begin{align*}
\hd_2(DV') & \leqslant \gd(DV') + \hd_1(DV') + 1 \leqslant \gd(V') + \hd_1(DV') & \text{ by Lemma \ref{the kernel}}\\
& \leqslant \gd(V') + \gd(U) \leqslant \gd(V') + \gd(DW) & \text{ by the two short exact sequences}\\
& \leqslant \gd(V') + \gd(W) - 1 = \gd(V') + \hd_1(V') - 1 & \text{ by Lemma \ref{the kernel}.}\\
\end{align*}
Putting these two inequalities together, we get the claimed upper bound for $\td(V)$.

Now we turn to the upper bound of homological degrees. For $s \geqslant 1$, from this long exact sequence we also have
\begin{align*}
\hd_s (DW) & \leqslant \max \{\hd_s(K), \, \hd_s(U) \} \\
& \leqslant \max \{ \td(K) + s, \, \hd_s(U) \} & \text{ by \cite[Theorem 1.5]{Li}}\\
& = \max \{ \gd(V') + \hd_1(V') + s - 1, \, \hd_{s+1} (DV') \} & \text{ by what we just proved.}
\end{align*}
But by the induction hypothesis,
\begin{align*}
\hd_{s+1} (DV') & \leqslant \gd(DV') + \hd_1(DV') + s \leqslant \gd(V') - 1 + \gd(U) + s & \text{ by the second sequence}\\
& \leqslant \gd(V') + \gd(DW) + s - 1 & \text{ by the first sequence}\\
& \leqslant \gd(V') + \gd(W) + s - 2 = \gd(V') + \hd_1(V') + s - 2 & \text{ by Lemma \ref{the kernel}.}
\end{align*}
Combining these two inequalities, we have
\begin{equation*}
\hd_s (DW) \leqslant \gd(V') + \hd_1(V') + s - 1.
\end{equation*}
Note that $W$ is a torsionless module, and $\gd(W) = \hd_1(V') > \gd(V') \geqslant 0$. Therefore, by \cite[Corollary 2.12]{LY}, we get
\begin{align*}
\hd_{s+1} (V') & = \hd_s(W) \leqslant \max \{ \gd(DW), \, \ldots, \, \hd_s(DW) \} + 1 \leqslant \gd(V') + \hd_1(V') + s
\end{align*}
for $s \geqslant 1$. This inequality holds obviously for $s = 0$. The conclusion then follows from induction.
\end{proof}

We know that $\Sigma_N V$ is filtered for $N \gg 0$. As pointed out in \cite{LY}, this result as well as the above theorem imply another upper bound of the Castelnuovo-Mumford regularity. For the convenience of the reader, we repeat the proof.

\begin{corollary}
Let $V$ be a finitely generated $\C$-module. Then for $s \geqslant 1$,
\begin{equation*}
\hd_s(V) \leqslant \max \{ 2\gd(V) - 1, \td(V) \} + s.
\end{equation*}
\end{corollary}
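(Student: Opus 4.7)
The plan is to strengthen the bound $\hd_s(V) \leqslant \gd(V) + \hd_1(V) + s - 1$ from the preceding theorem by invoking Nagpal's result \cite{N} that $\Sigma_N V$ is filtered for $N \gg 0$. The theorem alone is insufficient: setting $s = 1$ only gives $\hd_1(V) \leqslant \max\{2\gd(V), \td(V) + 1\}$, and reinserting this produces a strictly weaker bound than the target. Extra control on $\hd_1(V)$ must come from shifting, which decreases the torsion degree and eventually lands $V$ in the filtered class, where every $\hd_s$ with $s \geqslant 1$ vanishes.

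I would argue by induction on $\gd(V)$. The cases $V = 0$ and $V$ filtered are trivial. For the inductive step, use the four-term exact sequence of Lemma \ref{the kernel},
\begin{equation*}
0 \to K \to V \to \Sigma V \to DV \to 0,
\end{equation*}
split into $0 \to K \to V \to V/K \to 0$ and $0 \to V/K \to \Sigma V \to DV \to 0$. The associated long exact sequences in $\Tor$ yield
\begin{equation*}
\hd_s(V) \leqslant \max\{\hd_s(K),\ \hd_{s+1}(DV),\ \hd_s(\Sigma V)\}.
\end{equation*}
For the first term, $\hd_s(K) \leqslant \td(K) + s \leqslant \td(V) + s$ by \cite[Theorem 1.5]{Li}, since $K$ is torsion. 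For the third term, a secondary induction on the minimal $N$ such that $\Sigma^N V$ is filtered (this quantity strictly decreases upon passing from $V$ to $\Sigma V$), combined with $\gd(\Sigma V) \leqslant \gd(V)$ and $\td(\Sigma V) \leqslant \td(V) - 1$, yields $\hd_s(\Sigma V) \leqslant \max\{2\gd(V) - 1,\ \td(V) - 1\} + s$. For the middle term, $\gd(DV) = \gd(V) - 1$ by Lemma \ref{the kernel}, so the primary induction hypothesis applies directly to $DV$.

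The \textbf{main obstacle} is controlling $\td(DV)$ in terms of $\gd(V)$ and $\td(V)$ tightly enough that the induction hypothesis applied to $DV$ fits within the target bound. The desired estimate is $\td(DV) \leqslant \max\{\td(V) - 1,\ 2\gd(V) - 2\}$, which would give
\begin{equation*}
\hd_{s+1}(DV) \leqslant \max\{2\gd(V) - 3,\ \td(DV)\} + (s+1) \leqslant \max\{2\gd(V) - 1,\ \td(V)\} + s,
\end{equation*}
matching the target. Establishing this torsion bound on $DV$ requires revisiting the snake-lemma diagram constructed in the previous theorem's proof and applying the first inequality $\td(-) \leqslant \gd(-) + \hd_1(-) - 1$ to the auxiliary modules $W$, $P$, $U$, $DW$ appearing there. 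Once these pieces are in place, taking the maximum of the three bounds on $\hd_s(K)$, $\hd_{s+1}(DV)$, and $\hd_s(\Sigma V)$ yields $\hd_s(V) \leqslant \max\{2\gd(V) - 1,\ \td(V)\} + s$ as required.
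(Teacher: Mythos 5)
Your reduction via the four-term sequence $0 \to K \to V \to \Sigma V \to DV \to 0$ and the resulting inequality $\hd_s(V) \leqslant \max\{\hd_s(K),\ \hd_{s+1}(DV),\ \hd_s(\Sigma V)\}$ is sound, as is the handling of $K$ and the double-induction scheme for $\Sigma V$. But the argument is not complete: everything hinges on the estimate $\td(DV) \leqslant \max\{\td(V)-1,\ 2\gd(V)-2\}$, which you yourself flag as ``the main obstacle'' and then do not prove, offering only a pointer to ``revisiting the snake-lemma diagram.'' This estimate is not a formal consequence of anything established earlier: the first part of the theorem only gives $\td(DV) \leqslant \gd(DV) + \hd_1(DV) - 1$, and $\hd_1(DV)$ is exactly the kind of quantity you have no a priori control over (torsion classes in the quotient $DV = \coker(V \to \Sigma V)$ need not come from torsion classes in $\Sigma V$). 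Until that inequality is supplied, the middle term of your maximum is unbounded and the proof does not close.

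The paper avoids this difficulty entirely by choosing a different decomposition. It splits $V$ as $0 \to V_T \to V \to V_F \to 0$; the torsion part is handled exactly as your $K$ is, and for the torsionless part it uses $0 \to V_F \to \Sigma_N V_F \to C \to 0$ with $N$ so large that $\Sigma_N V_F$ is \emph{filtered}. Since filtered modules have vanishing higher homologies, the long exact sequence gives $\hd_s(V_F) = \hd_{s+1}(C)$, and the already-proven bound $\hd_{s+1}(C) \leqslant \gd(C) + \hd_1(C) + s$ only requires the elementary estimates $\gd(C) \leqslant \gd(V_F) - 1$ and $\hd_1(C) \leqslant \gd(V_F)$ --- no torsion-degree bound on any cokernel is ever needed. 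If you want to salvage your route, you should either prove the $\td(DV)$ estimate outright or restructure the argument so that, as in the paper, the shifted module you compare against is actually filtered rather than merely one step closer to being filtered.
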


\begin{proof}
There is a classical short exact sequence
\begin{equation*}
0 \to V_T \to V \to V_F \to 0,
\end{equation*}
where $V_T$ is a torsion module and $V_F$ is torsionless.

For the torsion part $V_T$, one has
\begin{equation}
\hd_s(V_T) \leqslant \td(V_T) + s = \td(V) + s
\end{equation}
by \cite[Theorem 1.5]{Li}. For $V_F$, there is a short exact sequence
\begin{equation*}
0 \to V_F \to \Sigma_N V_F \to C \to 0
\end{equation*}
where $N$ is large enough such that $\Sigma_N V_F$ is a filtered module; see \cite[Theorem 1.6]{LY}. The long exact sequence and homological characterizations of filtered modules (\cite[Theorem 1.3]{LY}) tell us that
\begin{equation*}
\hd_s(V_F) = \hd_{s+1} (C) \leqslant \gd(C) + \hd_1(C) + s
\end{equation*}
for $s \geqslant 1$. We also note that
\begin{align*}
& \gd(C) \leqslant \gd(V_F) - 1 \leqslant \gd(V) - 1,\\
& \hd_1(C) \leqslant \gd(V_F) \leqslant \gd(V).
\end{align*}
Putting these pieces of information together, we have
\begin{equation}
\hd_s(V_F) \leqslant 2\gd(V) + s -1.
\end{equation}
The conclusion then follows from (2.1) and (2.2).
\end{proof}

\section{Complexes of filtered modules}

In this section we construct a finite complex of filtered modules for each finitely generated $\C$-module $V$, and use it to prove the last four statements in Theorem \ref{main result}. We will see from the proofs that the stability phenomena of finitely generated $\C$-modules actually follow from those of filtered modules.

Recall that there is a classical short exact sequence
\begin{equation*}
0 \to V_T \to V \to V_F \to 0,
\end{equation*}
and $\Sigma_N V_F$ is filtered for $N \gg 0$. Choose a large enough $N$ and denote $\Sigma_N V_F$ by $F^{-1}$. We get a map $V \to F^{-1}$ which is the composite of $V \to V_F$ and $V_F \to F^{-1}$. Let $V^{-1}$ be the cokernel of this map, and repeat the above process for $V^{-1}$. Eventually we construct a complex of filtered modules. This is a finite complex with torsion homologies. Explicitly, in the complex
\begin{equation*}
\xymatrix{
F^{\bullet}: & 0 \ar[r] ^-{\delta^0} & V \ar[r] ^-{\delta^{-1}} & F^{-1} \ar[r] ^-{\delta^{-2}} & F^{-2} \ar[r] ^{\delta^{-3}} & \ldots \ar[r] ^-{\delta^{-n-1}} & F^{-n-1} \ar[r] ^-{\delta^{-n-2}} & 0.
}
\end{equation*}
one has $n \leqslant \gd(V)$. Moreover, if we let $V^0 = V$ and let $V^i$ be the cokernel of $\delta^i$ for $-n - 2 \leqslant i \leqslant -1$, then from the above construction one easily sees that the image of $\delta^i$ is $V^{i+1}_F$, the torsionless part of $V^{i+1}$, and the $i$-th homology
\begin{equation*}
H_i (F^{\bullet}) = \ker \delta^i / \Image \delta ^{i+1} \cong V^{i+1}_T,
\end{equation*}
the torsion part of $V^{i+1}$. For details, see the proof of \cite[Theorem A]{N} or the proof of \cite[Theorem 4.4]{LY}.

\begin{proposition}
In the above complex, one has
\begin{equation*}
\td(H_{-1} (F^{\bullet})) = \td(V)
\end{equation*}
and for $-n - 2 \leqslant i \leqslant -2$,
\begin{equation*}
\td(H_i(F^{\bullet})) \leqslant 2\gd(V) + 2i + 2.
\end{equation*}
\end{proposition}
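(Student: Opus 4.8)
The case $i = -1$ requires nothing: by the construction $H_{-1}(F^{\bullet}) \cong V^0_T = V_T$, and since the socle of any module equals the socle of its torsion part, $\td(V_T) = \td(V)$. For the remaining cases I will use that $H_i(F^{\bullet}) \cong V^{i+1}_T$ and $\td(V^{i+1}_T) = \td(V^{i+1})$, so that it suffices to prove
\begin{equation*}
\td(V^j) \leqslant 2\gd(V) + 2j \qquad \text{for } -n-1 \leqslant j \leqslant -1.
\end{equation*}
The plan is to bound $\gd(V^j)$ and $\hd_1(V^j)$ separately and then feed both into the inequality $\td(W) \leqslant \gd(W) + \hd_1(W) - 1$ proved in the previous section.

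The crux is to show that each step of the construction strictly lowers the generating degree. For every $m \leqslant 0$ the construction furnishes a short exact sequence
\begin{equation*}
0 \to V^m_F \to F^{m-1} \to V^{m-1} \to 0,
\end{equation*}
where $V^m_F$ is torsionless and $F^{m-1} = \Sigma_N V^m_F$ is filtered for a large enough $N$; in particular $V^{m-1} \cong \Sigma_N V^m_F / V^m_F$. For any torsionless $W$, the module $\Sigma_N W / W$ carries a filtration whose graded pieces are $DW, \Sigma_1 DW, \ldots, \Sigma_{N-1} DW$ (use exactness of $\Sigma$ together with $\Sigma_k W / \Sigma_{k-1} W \cong \Sigma_{k-1} DW$), so $\gd(\Sigma_N W / W) \leqslant \gd(DW)$, which is $\gd(W) - 1$ by Lemma~\ref{the kernel}. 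Applying this with $W = V^m_F$, a quotient of $V^m$, yields $\gd(V^{m-1}) \leqslant \gd(V^m) - 1$, and a downward induction from $\gd(V^0) = \gd(V)$ gives $\gd(V^m) \leqslant \gd(V) + m$ for all $m \leqslant 0$. Feeding the same short exact sequence into $\Tor^{\tC}_{\bullet}(\tC_0, -)$ and using $H_1(F^{m-1}) = 0$, which holds because $F^{m-1}$ is filtered (\cite[Theorem~1.3]{LY}), the long exact sequence gives an injection $H_1(V^{m-1}) \hookrightarrow H_0(V^m_F)$, hence $\hd_1(V^{m-1}) \leqslant \gd(V^m_F) \leqslant \gd(V^m) \leqslant \gd(V) + m$.

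Now fix $j$ with $-n-1 \leqslant j \leqslant -1$ and put $m = j + 1 \leqslant 0$. Applying the theorem of the previous section to the finitely generated module $V^j$ and inserting the two bounds just obtained,
\begin{equation*}
\td(V^j) \leqslant \gd(V^j) + \hd_1(V^j) - 1 \leqslant \big( \gd(V) + j \big) + \big( \gd(V) + j + 1 \big) - 1 = 2\gd(V) + 2j.
\end{equation*}
Translating back through $j = i + 1$, and recalling $\td(H_i(F^{\bullet})) = \td(V^{i+1}_T) = \td(V^{i+1})$, this is exactly $\td(H_i(F^{\bullet})) \leqslant 2\gd(V) + 2i + 2$ for $-n-2 \leqslant i \leqslant -2$.

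I expect the generating-degree estimate $\gd(V^m) \leqslant \gd(V) + m$ to be the only real obstacle: it relies on extracting from the construction the precise isomorphism $V^{m-1} \cong \Sigma_N V^m_F / V^m_F$ and on the fact that $\Sigma_N W / W$ is built from shifts of $DW$, where Lemma~\ref{the kernel} supplies the drop in generating degree. After that the argument is a routine chase of long exact sequences combined with the torsion-degree bound already available from the previous section.
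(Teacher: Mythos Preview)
Your proof is correct and follows essentially the same route as the paper: identify $H_i(F^\bullet)\cong V^{i+1}_T$, apply the bound $\td\leqslant\gd+\hd_1-1$, and control $\hd_1(V^{i+1})$ via the long exact sequence coming from $0\to V^{i+2}_F\to F^{i+1}\to V^{i+1}\to 0$. The only difference is cosmetic: the paper invokes \cite[Theorem~4.14]{LY} for the inequality $\gd(V^m)\leqslant\gd(V)+m$, whereas you supply a direct argument via the filtration of $\Sigma_N W/W$ by shifts of $DW$.
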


\begin{proof}
The conclusion holds obviously for $i = -1$ since $H_{-1} (F^{\bullet}) \cong V_T$. For $-n - 2 \leqslant i \leqslant -2$, one has $H_i (F^{\bullet}) \cong V^{i+1}_T$, and the following short exact sequence
\begin{equation*}
0 \to V_F^{i+2} \to F^{i+1} \to V^{i+1} \to 0
\end{equation*}
induces an exact sequence
\begin{equation*}
0 \to H_1(V^{i+1}) \to H_0 (V_F^{i+2}) \to H_0(F^{i+1}) \to H_0(V^{i+1}) \to 0.
\end{equation*}
By the first statement in Theorem \ref{main result}, one has
\begin{align*}
\td (H_i (F^{\bullet})) & = \td (V_T^{i+1}) = \td (V^{i+1})\\
& \leqslant \gd(V^{i+1}) + \hd_1(V^{i+1}) - 1\\
& \leqslant \gd(V^{i+1}) + \gd(V_F^{i+2}) - 1\\
& \leqslant \gd(V^{i+1}) + \gd(V^{i+2}) - 1\\
& \leqslant (\gd(V) + i + 1) + (\gd(V) + i + 2) - 1\\
& = 2\gd(V) + 2i + 2,
\end{align*}
where the last inequality follows from the proof of \cite[Theorem 4.14]{LY}.
\end{proof}

\begin{remark}
In \cite{R} Ramos defined the notion \emph{derived regularity} for finitely generated modules $V$. Actually, we can see that the derived regularity of $V$ is precisely the maximum of torsion degrees of homologies in this complex. From this observation, one immediately deduces that the derived regularity of $V$ is bounded by $\max \{\td(V), \, 2\gd(V) - 2\}$.
\end{remark}

We get three corollaries:

\begin{corollary}
Let $V$ be a finitely generated $\C$-module. Then for $N > \max \{\td(V), \, 2\gd(V) - 2 \}$, $\Sigma_N V$ is a filtered module.
\end{corollary}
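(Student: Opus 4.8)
The plan is to apply the shift functor $\Sigma_N$ to the finite complex $F^{\bullet}$ of filtered modules constructed above, to observe that for $N$ in the stated range the shift annihilates all of its (torsion) homologies, and then to read off from the resulting exact complex of filtered modules that $\Sigma_N V$ is itself filtered.

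First I would record three facts. (i) The shift functor $\Sigma$, hence each $\Sigma_N$, is exact, so $H_i(\Sigma_N F^{\bullet}) \cong \Sigma_N H_i(F^{\bullet})$. (ii) $\Sigma$ preserves filtered modules (see \cite{LY}), so each $\Sigma_N F^{j}$ is again filtered. (iii) A finitely generated torsion module $T$ satisfies $T_d = 0$ for $d > \td(T)$: if $m$ is the top degree in which $T$ is nonzero, then every morphism in $\C(m, m+1)$ annihilates $T_m$ because $T_{m+1} = 0$, whence $\Hom_{\C}(\tC(m, m), T) \cong T_m \neq 0$ and so $\td(T) \geqslant m$; consequently $\Sigma_N T = 0$ whenever $N > \td(T)$.

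Now fix $N > \max\{\td(V),\, 2\gd(V) - 2\}$. By the Proposition, $\td(H_{-1}(F^{\bullet})) = \td(V) < N$, while for $-n - 2 \leqslant i \leqslant -2$ one has $\td(H_i(F^{\bullet})) \leqslant 2\gd(V) + 2i + 2 \leqslant 2\gd(V) - 2 < N$. Since each $H_i(F^{\bullet})$ is finitely generated and torsion, fact (iii) gives $\Sigma_N H_i(F^{\bullet}) = 0$ for all $i$, so by fact (i) the complex
\begin{equation*}
0 \to \Sigma_N V \to \Sigma_N F^{-1} \to \Sigma_N F^{-2} \to \cdots \to \Sigma_N F^{-n-1} \to 0
\end{equation*}
is exact, and by fact (ii) every term other than possibly $\Sigma_N V$ is filtered. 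Splitting this exact complex into short exact sequences and applying, from the right-hand end, the fact that the class of filtered modules is closed under kernels of surjections between filtered modules (part of the homological characterization of filtered modules, \cite[Theorem 1.3]{LY}), I would conclude successively that each image occurring in the complex is filtered, and finally that $\Sigma_N V$ itself is filtered.

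I expect the only non-formal step to be the last one, namely the closure of the class of filtered modules under kernels of surjections; everything else — exactness of $\Sigma_N$, preservation of filtered modules, vanishing of sufficiently shifted torsion modules, and the bookkeeping separating the contribution $\td(V)$ coming from $H_{-1}(F^{\bullet})$ from the contribution $2\gd(V) - 2$ coming from the remaining homologies — is routine.
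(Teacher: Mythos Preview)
Your proposal is correct and follows essentially the same approach as the paper: apply $\Sigma_N$ to the complex $F^{\bullet}$, use exactness of $\Sigma_N$ and the Proposition's torsion-degree bounds to see that all homologies vanish, and then conclude that $\Sigma_N V$ is filtered from the resulting finite filtered resolution. The paper packages your final step as a direct citation of \cite[Corollary 4.3]{LY} (a module admitting a finite filtered resolution is filtered), whereas you unpack this by iteratively applying the homological characterization \cite[Theorem 1.3]{LY} to the short exact sequences; these amount to the same argument.
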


\begin{proof}
Apply the functor $\Sigma_N$ to the finite complex of filtered modules. Note that $\Sigma_N$ is an exact functor and it preserves filtered modules by \cite[Proposition 3.9]{LY}. Moreover, all homologies vanish since their torsion degrees are strictly less than $N$. Consequently, we get a resolution of $\Sigma_N V$ by filtered modules. However, by \cite[Corollary 4.3]{LY}, we deduce that $\Sigma_N V$ is filtered as well.
\end{proof}

\begin{remark} \normalfont
In \cite[Theorem C]{R} Ramos showed that if $N$ is strictly greater than the derived regularity of $V$, then $\Sigma_N V$ is filtered. Since we already know that the derived regularity is bounded by $\max \{ \td(V), \, 2\gd(V) - 2 \}$, the above corollary follows from this fact immediately.
\end{remark}

\begin{corollary}
Let $\mk$ be a field and let $V$ be a finitely generated $\C$-module. Then there exists a rational polynomial $f \in \mathbb{Q}[X]$ such that $\dim_{\mk} V_N = f(N)$ for $N > \max \{\td(V), \, 2\gd(V) - 2 \}$.
\end{corollary}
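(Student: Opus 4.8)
The plan is to reduce the polynomial growth statement for $V$ to the corresponding statement for filtered modules, exactly as the previous corollary reduced the filtered property of $\Sigma_N V$ to that of the pieces $F^i$. First I would recall that basic filtered modules have dimension vectors governed by binomial polynomials: if $W \cong \tC \otimes_{\mk G_m} W_m$, then for $N \geqslant m$ one has $\dim_{\mk} W_N = (\dim_{\mk} W_m / |G_m|) \cdot |G| \cdot \binom{N}{m} \cdot \frac{N!}{(N-m)!}$-type count, or more precisely $\dim_{\mk} (\tC(m,-))_N$ is a polynomial in $N$ of degree $m$ for $N \geqslant m$; summing over a filtration shows that for any filtered module $F$ there is a rational polynomial agreeing with $\dim_{\mk} F_N$ for $N$ at least the top degree in the filtration, which is bounded by $\gd(F)$. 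This is elementary and I would only sketch it.

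Next I would invoke the finite complex $F^{\bullet}$ of filtered modules constructed in this section together with the bounds from the preceding proposition. Taking $\mk$-dimensions of the terms in degree $N$, the complex $0 \to V_N \to F^{-1}_N \to F^{-2}_N \to \cdots \to F^{-n-1}_N \to 0$ has Euler characteristic equal to the alternating sum of $\dim_{\mk} H_i(F^{\bullet})_N$. Each homology is a finitely generated torsion module, and by the proposition all of its torsion degrees are at most $\max\{\td(V),\, 2\gd(V)-2\}$; hence for $N > \max\{\td(V),\, 2\gd(V)-2\}$ every $H_i(F^{\bullet})_N = 0$. Therefore in that range the complex is exact, and so
\begin{equation*}
\dim_{\mk} V_N = \sum_{i=1}^{n+1} (-1)^{i+1} \dim_{\mk} F^{-i}_N.
\end{equation*}

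Finally I would combine the two ingredients: each $F^{-i}$ is filtered, so $\dim_{\mk} F^{-i}_N$ agrees with a rational polynomial for $N$ large, and in fact I want this agreement to hold already for $N > \max\{\td(V),\, 2\gd(V)-2\}$. For that I would note that $\gd(F^{-i}) \leqslant \gd(V) - i + \text{(shift amount)}$ is controlled, but more cleanly I would argue that the polynomial identity for a filtered module $F$ holds for all $N \geqslant \gd(F)$, and that each $\gd(F^{-i})$ is bounded by a quantity no larger than $\max\{\td(V),\,2\gd(V)-2\}$ (using $n \leqslant \gd(V)$ and the generating-degree estimates already recorded in this section); then the finite alternating sum $f(N) = \sum_{i}(-1)^{i+1} f_{-i}(N)$ of these polynomials is the desired rational polynomial, and it equals $\dim_{\mk} V_N$ on the stated range. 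The main obstacle is the bookkeeping in this last step: verifying that the threshold beyond which each $\dim_{\mk} F^{-i}_N$ becomes polynomial is genuinely $\leqslant \max\{\td(V),\, 2\gd(V)-2\}$, rather than merely "$N \gg 0$", so that the single uniform bound in the statement is correct; this needs the explicit generating-degree bounds on the $V^i$ and $F^i$ from the construction, and it is exactly the kind of estimate that was just assembled for the preceding corollary, so I expect it to go through with care but without new ideas.
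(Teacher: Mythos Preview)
Your proposal is correct and follows essentially the same route as the paper: take the Euler characteristic of the complex $F^{\bullet}$ in degree $N$, observe that all homologies vanish for $N$ in the stated range by the preceding proposition, and invoke the polynomial growth of each filtered term $F^{-i}$ for $N \geqslant \gd(F^{-i})$. The one concrete estimate the paper supplies where you defer to ``bookkeeping'' is $\gd(F^{-i}) \leqslant \gd(V)$ for every $i$ (cited from the proof of \cite[Theorem 4.14]{LY}), which is precisely what forces the polynomial identity for each $F^{-i}$ to hold already on the range $N > \max\{\td(V),\,2\gd(V)-2\}$.
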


\begin{proof}
Note that in the complex
\begin{equation*}
0 \to V \to F^{-1} \to \ldots \to F^{-n-1} \to 0,
\end{equation*}
the dimensions $\dim_{\mk} F^i_s$ satisfy polynomials $f_i \in \mathbb{Q}[X]$ for $-n-1 \leqslant i \leqslant -1$ and $s \geqslant \gd(F^i)$. Since $\gd(F^i) \leqslant \gd(V)$ by the proof of \cite[Theorem 4.14]{LY}, this polynomial growth phenomenon holds for all $F^i$ when $s > \max \{\td(V), \, 2\gd(V) - 2 \}$, including the cases that $\gd(V) = 0$ or $\gd(V) = 1$. Therefore, for $N > \max \{\td(V), \, 2\gd(V) - 2 \}$ and an arbitrary $i$, all $\dim_{\mk} F^i_N$ satisfy the polynomial $f_i$. Moreover, we also know that all homologies are supported on objects $\leqslant \max \{\td(V), \, 2\gd(V) - 2 \}$. Consequently, for $N > \max \{\td(V), \, 2\gd(V) - 2 \}$, the dimensions $\dim_{\mk} V_N$ satisfy a polynomial, which is a linear combination of those $f_i$.
\end{proof}

\begin{remark} \normalfont
The minimal number $N$ such that for $n \geqslant N$, all $\dim_{\mk} V_n$ satisfy a polynomial is called the \emph{stable range} of $V$ by Ramos in \cite{R}. Theorem D in his paper asserts that the stable range of $V$ is bounded by $r + \min \{r, \, \gd(V)\}$, where $r$ is the relation degree. Since $\td(V) \leqslant \gd(V) + \hd_1(V) -1$ and $\hd_1(V) \leqslant r$, for general modules the bound provided in the above corollary might be a little bit more optimal.
\end{remark}

\begin{corollary}
The injective dimension $\id(V) \leqslant \max \{ 2\gd(V) - 1, \, \td(V) \}$ whenever $\mk$ is a field of characteristic 0 and $V$ is not injective.
\end{corollary}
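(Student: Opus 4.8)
The plan is to prove, by induction on $\gd(V)$, that $\id(V)\leqslant\max\{2\gd(V)-1,\td(V)\}$ whenever $V$ is not injective. The first move is the canonical decomposition $0\to V_T\to V\to V_F\to 0$ with $V_T$ torsion and $V_F$ torsionless: the long exact sequence for $\mathrm{Ext}^{\bullet}_{\tC}(M,-)$ gives $\id(V)\leqslant\max\{\id(V_T),\id(V_F)\}$, and since $\gd(V_F)\leqslant\gd(V)$ while $\td(V_T)=\td(V)$, it is enough to treat the torsion and torsionless summands separately. I will use two facts that hold when $\mk$ is a field of characteristic $0$. First, every finitely generated filtered module is injective: each basic filtered module $\tC\otimes_{\mk G_n}V_n$ is projective, because $\mk G_n$ is semisimple so $V_n$ is $\mk G_n$-projective, hence it is injective by \cite{GL}; and a module carrying a finite filtration by injective subquotients is injective, since at each stage the submodule of the relevant extension is injective and therefore splits off. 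Second, the injective envelope $E(T)$ of a finitely generated torsion module $T$ is again finitely generated and supported in degrees $\leqslant\td(T)$: the socle $\Hom_{\C}(\tC_0,T)$ is essential in $T$ and is semisimple (a $\tC_0$-module which in each degree is a semisimple $\mk G_n$-module), so $E(T)$ is a finite direct sum of injective envelopes of simple modules concentrated in degrees $\leqslant\td(T)$, each such envelope being the associated coinduced module (the value at that simple of the right adjoint of the exact evaluation functor at object $n$), which is supported in degrees $\leqslant n$; see \cite{GL}.

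\emph{The torsion part.} I claim $\id(T)\leqslant\td(T)$ for every finitely generated torsion module $T$, by induction on $d=\td(T)$. By the discussion above $E(T)$ is supported in degrees $\leqslant d$, and its component in degree $d$ coincides with the degree-$d$ part of the socle of $T$, hence lies inside $T_d$; so $E(T)/T$ is a torsion module with $\td(E(T)/T)\leqslant d-1$. By induction $\id(E(T)/T)\leqslant d-1$, and the short exact sequence $0\to T\to E(T)\to E(T)/T\to 0$ with $E(T)$ injective gives $\id(T)\leqslant\id(E(T)/T)+1\leqslant d$. The base case $d=0$ holds because a torsion module with $\td=0$ is concentrated in degree $0$ and is therefore its own injective envelope. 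Applying this to $T=V_T$ yields $\id(V_T)\leqslant\td(V_T)=\td(V)$.

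\emph{The torsionless part.} Choose $N$ large enough that $\Sigma_N V_F$ is filtered, which is possible by \cite[Theorem 1.6]{LY} or by Theorem \ref{main result}(5) (here $\td(V_F)=-\infty$), hence $\Sigma_N V_F$ is injective by the first fact above. Since $V_F$ is torsionless, the natural map $V_F\to\Sigma_N V_F$ is injective by Lemma \ref{the kernel}; let $C$ be its cokernel. From $0\to V_F\to\Sigma_N V_F\to C\to 0$ one gets $\mathrm{Ext}^{k}_{\tC}(M,V_F)\cong\mathrm{Ext}^{k-1}_{\tC}(M,C)$ for $k\geqslant 2$, so $\id(V_F)\leqslant\max\{\id(C)+1,\,1\}$. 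Now $C$ admits a filtration whose subquotients are shifts of $DV_F$, so $\gd(C)\leqslant\gd(DV_F)\leqslant\gd(V_F)-1<\gd(V)$ by Lemma \ref{the kernel}; and comparing first homologies along the same short exact sequence, using the homological characterization of filtered modules \cite[Theorem 1.3]{LY}, gives $\hd_1(C)\leqslant\gd(V_F)$, whence $\td(C)\leqslant\gd(C)+\hd_1(C)-1\leqslant 2\gd(V_F)-2\leqslant 2\gd(V)-2$ by Theorem \ref{main result}(1). Since $\gd(C)<\gd(V)$, the induction hypothesis gives $\id(C)\leqslant\max\{2\gd(C)-1,\td(C)\}\leqslant 2\gd(V)-2$, and therefore $\id(V_F)\leqslant 2\gd(V)-1$.

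Combining the two estimates gives $\id(V)\leqslant\max\{\td(V),\,2\gd(V)-1\}$. The one point needing attention is the base $\gd(V)=0$, where this maximum may be negative: there $V$ is a quotient of a direct sum of copies of the constant module, so $V_F$ is again a direct sum of copies of the constant module (a torsionless quotient of the constant module being constant), hence injective, while $V_T$ is torsion with $\td(V_T)=\td(V)$; if moreover $\td(V)\leqslant 0$ then $V_T$ is injective too, so $V$ itself is injective and lies in the excluded case, and otherwise $\max\{\td(V),2\gd(V)-1\}=\td(V)\geqslant 0$ and the argument applies. The step I expect to be the real work is the second characteristic-$0$ input: that the injective envelope of a simple torsion module concentrated in degree $n$ is finitely generated and supported in degrees $\leqslant n$. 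This is exactly what lets the torsion induction terminate at the sharp value $\td(T)$; a blunter argument through a composition series would only give $\id(T)\leqslant 2\td(T)-1$, which would then feed back too weakly into the torsionless estimate. Everything else is bookkeeping with the stabilization property of $\Sigma_N$, the identification of filtered modules with injectives in characteristic $0$, and the Castelnuovo--Mumford bound of Theorem \ref{main result}(1).
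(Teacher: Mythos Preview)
Your proof is correct and follows the same overall architecture as the paper: induct on $\gd(V)$, split $V$ into its torsion and torsionless parts, bound $\id(V_T)\leqslant\td(V)$ by peeling off socle layers via injective envelopes, and embed $V_F$ into an injective whose cokernel has strictly smaller generating degree so the induction runs. The one genuine difference is the choice of injective for $V_F$. The paper invokes the coinduction functor from \cite{GL} to embed $V_F$ into a projective $P$ with $\gd(P)<\gd(V_F)$, and then bounds the cokernel $W$. You instead use the shift embedding $V_F\hookrightarrow\Sigma_N V_F$ with $\Sigma_N V_F$ filtered (hence projective, hence injective in characteristic~$0$), and bound the cokernel $C$ exactly as in the proof of Corollary~2.5 of this paper: $\gd(C)\leqslant\gd(V_F)-1$ and $\hd_1(C)\leqslant\gd(V_F)$, so $\td(C)\leqslant 2\gd(V)-2$ by part~(1). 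Your route has the virtue of staying entirely within the shift-functor toolkit developed in this paper, avoiding the separate coinduction machinery; the paper's route is slightly more direct in that $P$ is already projective without appealing to the filtered-implies-projective step. One small imprecision: the successive quotients of $C$ are $D(\Sigma^i V_F)$ rather than literally $\Sigma^i(DV_F)$, but since $\gd(D(\Sigma^i V_F))\leqslant\gd(\Sigma^i V_F)-1\leqslant\gd(V_F)-1$ this does not affect the bound (and in any case $\gd(C)\leqslant\gd(V_F)-1$ is already established in the paper).
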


\begin{proof}
We claim that $\max \{ 2\gd(V) - 1, \, \td(V) \} \geqslant 0$. Otherwise, $V$ is 0 or a direct sum of $\tC(0, -)$, contradicting the assumption that $V$ is not injective.

We prove the conclusion by induction on $\gd(V)$. If $\gd(V) = 0$, then the short exact sequence
\begin{equation*}
0 \to V_T \to V \to V_F \to 0
\end{equation*}
and the given assumption imply that $V \cong V_T \oplus V_F$ since $V_F$, if it is nonzero, is a torsionless module generated in degree 0, which must be projective. Therefore, $\id(V) = \id(V_T)$ since $V_F$ is injective as well. But it is well known that for a finite dimensional module $V_T$, its injective dimension is bounded by $\td(V_T) = \td(V)$, so the conclusion follows. To see this, one just observes that the value of $V_T$ on the object $\td(V_T)$ is in the socle of $V_T$. Therefore, there is an injective homomorphism $V_T \to I$ such that $I$ is a finite dimensional injective module and their values on the object $\td(V_T)$ are isomorphic. Consequently, the torsion degree of the cokernel of this map is strictly less than that of $V_T$. Now one can proceed by recursion.

For a general module $V$, consider the short exact sequence $0 \to V_T \to V \to V_F \to 0$. For $V_T$, we have $\id(V_T) \leqslant \td(V_T) = \td(V)$. For $V_F$, we have a short exact sequence
\begin{equation*}
0 \to V_F \to P \to W \to 0
\end{equation*}
where $P$ is a projective module, which is injective as well. Moreover, by Proposition 7.5 and Theorem 1.7 in \cite{GL}, one can assume that $\gd(P) < \gd(V_F)$, and $W$ has no projective summands. Consequently,
\begin{equation*}
\gd(W) \leqslant \gd(P) < \gd(V_F) \leqslant \gd(V)
\end{equation*}
and
\begin{equation*}
\td(W) \leqslant \gd(W) + \hd_1(W) - 1 \leqslant \gd(V) - 1 + \gd(V_F) - 1 \leqslant 2\gd(V) - 2
\end{equation*}
since $H_1(W) \subseteq H_0(V_F)$. By induction hypothesis,
\begin{equation*}
\id(W) \leqslant \max \{ 2\gd(W) - 1, \, \td(W) \} \leqslant \max \{ 2\gd(V) - 3, \, 2\gd(V) - 2 \} = 2\gd(V) - 2.
\end{equation*}
Consequently, $\id(V_F) \leqslant 2\gd(V) - 1$. Putting the two estimations together, we have the wanted conclusion for $V$.

\end{proof}

\end{document}